\documentclass[10pt]{article}
\usepackage{tikz}
\usepackage{amsmath}
\usepackage{amssymb}
\usepackage{amsthm}

\usepackage{enumerate}

\newtheorem{lemma}{Lemma}

\newtheorem{theorem}[lemma]{Theorem}
\newtheorem{corollary}[lemma]{Corollary}

\theoremstyle{definition}
\newtheorem{definition}{Definition}

\theoremstyle{remark}

\newtheorem{remark}{Remark}

\begin{document}
\begin{center}
	{\Large Twisted recurrence for dynamical systems with exponential decay of correlations}\\
	\vspace{.3cm}
	 Jiajie Zheng
	\end{center}
	
	\smallskip
	
	\begin{abstract}
		We study the set of points returning infinitely often to a sequence of targets dependent on the starting points. With an assumption of decay of correlations for $L^1$ against bounded variations, we prove a generalized quantitative recurrence result under Lipschitz twists.   
	\end{abstract}
	
\section{Introduction}

Let $(X,d)$ be a separable and compact metric space, and let $(X,\mu,T)$ be a Borel probability measure-preserving system. The Poincar\'{e} Recurrence Theorem, see e.g. \cite{EW} states that almost all points in measurable dynamical systems return close to themselves under a measure-preserving map; i.e.,  
\begin{equation*}
	\liminf_{n\to\infty}d(T^nx,x)=0 \quad \text{for almost every }x\in X.
\end{equation*}
Boshernitzan quantified the speed of recurrence in \cite{bosh}. Namely, if the $\alpha$-dimensional Hausdorff measure of $X$ is $\sigma$-finite for some $\alpha>0$, then 
\begin{equation*}
	\liminf_{n\to\infty}n^{1/\alpha}d(T^nx,x)<\infty \quad \text{for almost every }x\in X.
\end{equation*}
A natural generalization of the recurrence speed is to consider the following set 
\begin{equation*}
	R(\psi):=\{x\in X:d(T^nx,x)<\psi(n)\text{ for infinitely many }n\}
\end{equation*}
given a function $\psi:\mathbb{N}\to (0,\infty)$. Much has been done on the quantitative recurrence theory since then; for example, see \cite{BF,CWW,DFL,HLSW,KZ,KKP,Pe}. \par 
\smallskip
A topic closely related to recurrence theory is the so-called shrinking target problem, which is concerned with determining the speed at which the orbit of a $\mu$-generic point accumulates near a fixed point $y\in X$. More precisely, for $\psi:\mathbb{N}\to (0,\infty)$ and a given point $y\in X$, one can define the set 
\begin{equation*}
	A(\psi,y):=\{x\in X:d(T^nx,y)<\psi(n)\text{ for infinitely many }n\}.
\end{equation*}
There have been plenty of results concerning the zero-one laws for $\mu(A(\varphi,y))$ in specific systems; for example, see \cite{CK,FMP,HNPV,KM,LLVZ,Ph}. \par 
\smallskip
A more general setting called twisted recurrence, which can specialize into  shrinking target problem and quantitative recurrence, was introduced in \cite{KZ,LWW}. For $\psi: \mathbb{N}\to (0,\infty)$ and a Borel measurable function $f:X\to X$, one can consider the set 
\begin{equation*}
	R(\psi,f):=\{x\in X:d(T^nx,f(x))<\psi(n)\text{ for infinitely many }n\}.
\end{equation*}
 Clearly $R(\psi,f)=R(\psi)$ if $f$ is the identity function and $R(\psi,f)=A(\psi,y)$ if $f$ is the constant function with value $y$. When $f$ is Lipschitz, the zero-one laws for $\mu(R(\psi,f))$ were proved for some special systems in \cite{KZ,LWW}, including classical dynamical systems like $\beta$-transformations, Gauss transformations and left shift on Cantor sets. Specifically, in these cases 
 \begin{equation*}
 	\mu(R(\psi,f))=\begin{cases}
 		0 & \text{if }\sum_{n=1}^\infty \psi(n,x)^\delta <\infty \\
 		1 & \text{if }\sum_{n=1}^\infty \psi(n,x)^\delta =\infty,
 	\end{cases}
 \end{equation*} where $\delta$ is the Hausdorff dimension of the support of $\mu$.  \par 
\smallskip
In this paper, we prove quantitative Lipschitz recurrent properties of dynamical systems with exponential decay of correlations. To state the main result of the paper, we need to adapt and modify the settings and assumptions from \cite{KKP}. For the rest of the paper, let $X=[0,1]$ and $d$ be the standard metric, and assume for any sequence of positive real number $\{M_n\}_n$ contained in $(0,1)$, there exists a sequence of functions $\{r_n:X\to (0,1)\}_n$ such that $r_n(x)=\inf\{r:\mu(B(x,r))=M_n\}$ for all $x\in X$. For a function $f:X\to X$, the $f$-twisted recurrence set we are interested is defined by
 \begin{equation*}
 	R(\{M_n\}_n,f):=\{x\in X:T^nx\in B(f(x),r_n(x))\text{ for infinitely many }n\}.
 \end{equation*} We say that $\mu$ is \textsl{Ahlfors regular} if there exist constants $c,\delta>0$ such that 
 \begin{equation*}
 	\frac{1}{c}r^s\leq \mu(B(x,r))\leq cr^s \quad  \forall x\in \text{Supp}X \text{ and balls }B(x,r)\subset X
 \end{equation*} and that $\mu$ is \textsl{upper Ahlfors regular} if 
 there exist constants $c,\delta>0$ such that 
 \begin{equation}\label{uar}
 \mu(B(x,r))\leq cr^s \quad \forall x\in \text{Supp}X \text{ and balls }B(x,r)\subset X.
 \end{equation}
  If $\mu$ is Ahlfors regular, then $R(\{M_n\}_n,f)=R(\psi,f)$ where $\psi(n)=r_n(x)$. In general, these definitions are different. 
  Before stating our main theorems, we now specify the class of functions $f$ which we deal with by our technique. $f:X\to X$ is said to be \textsl{Lipschitz} if 
\begin{equation}
	\sup_{x,y\in X,x\neq y}\frac{d(f(x),f(y))}{d(x,y)}<\infty. \label{lip}
\end{equation}   \begin{definition}\label{def1}
 	Let $(X,\mu,T)$ be a measure-preserving system and $p:\mathbb{N}\to \mathbb{R}^+$ be a sequence. We say that the \textsl{correlations for the system decay as $p$ for $L^1$ against bounded variation (BV)}, if 
 	\begin{equation*}
	\left|\int (f\circ T^n)\cdot g\,d\mu-\int f\,d\mu\int g\,d\mu\right|\leq ||f||_{L^1}\cdot ||g||_{\text{BV}}\cdot p(n)
\end{equation*}
for all $n\in \mathbb{N}$ and for all functions $f$ with $||f||_1:=\int|f|\,d\mu<\infty $ and $g$ with $||g||_{\text{BV}}:=\sup_{x_i}\sum|g(x_{i+1})-g(x_i)|+\sup|g|<\infty$.
 \end{definition} 
 
\begin{remark}
 	Definition \ref{def1} is weaker than the uniform mixing condition found in \cite{KZ}, as for any balls $E,F\subset X$, we can take $f=\chi_E$ and $g=\chi_F$ and we will get 
 	\begin{equation*}
 		\left|\mu(T^{-n}E\cap F)-\mu(E)\mu(F)\right|\leq 3\mu(E) p(n).
 	\end{equation*}
 \end{remark}
 
\begin{remark}
 	For a non-increasing function $\psi:\mathbb{N}\to \mathbb{R}_{>0}$, we know that there exists some 
 	\begin{equation*}
 		\alpha\in \mathcal{W}(\psi):=\{\alpha \in [0,1]: |q\alpha-p|<\psi(q) \text{ for infinitely many natural numbers }p,q\}.
 	\end{equation*}
 	Then consider the system 
 	\begin{equation*}
 		T:[0,1]\to [0,1], \quad x\mapsto x+\alpha \, (\bmod{1})
 	\end{equation*} together with the Lebesgue measure. Then $R(\psi,\text{Id})=[0,1]$, where $\text{Id}:[0,1]\to [0,1]$ is the identity function. The convergence case of Theorem \ref{thm1} would fail; hence some mixing condition is need for a zero-one law for $R(\psi,f)$ to hold. For more details, see \cite[\S2]{KZ}.  
 \end{remark}
 \par 
 \smallskip
 We first state the sufficient condition for $R(\{M_n\}_n,f)=0$, which depends on the convergence of $\sum_{n=1}^\infty M_n$. If there exists a countable partition of subinterval $\{X_i\}_{i\in \mathcal{I}}$ so that $f|_{X_i}$ is Lipschitz for all $i\in \mathcal{I}$, then we say $f$ is \textsl{piecewise Lipschitz}. If there exists a countable partition of subinterval $\{X_i\}_{i\in \mathcal{I}}$ so that $f|_{X_i}$ is monotone for all $i\in \mathcal{I}$, then we say $f$ is \textsl{piecewise monotone}.
\begin{theorem}\label{thm1}
 	Let $p:\mathbb{N}\to \mathbb{R}^+$ be a function and assume the correlations for $(X,\mu,T)$ decay as $p$ for $L^1$ against BV with $\sum_{n=1}^\infty p(n)<\infty$. Let $\{M_n\}_n$ be a sequence contained in $(0,1)$. Suppose $f:X\to X$ is piecewise Lipschitz and piecewise monotone. If $\sum_{n=1}^\infty M_n<\infty$, then $\mu(R(\{M_n\}_n,f))=0$. 
 \end{theorem}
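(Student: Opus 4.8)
The plan is to run a first--moment (convergence) Borel--Cantelli argument. Writing $E_n:=\{x\in X:T^nx\in B(f(x),r_n(x))\}$, we have $R(\{M_n\}_n,f)=\limsup_{n\to\infty}E_n$, so it suffices to prove $\sum_{n=1}^\infty\mu(E_n)<\infty$; then $\mu(\limsup_n E_n)=0$ is immediate. Using that $f$ is piecewise Lipschitz and piecewise monotone, I would first pass to a common refinement $\{X_i\}_{i\in\mathcal I}$ of the two partitions, so that on each $X_i$ the restriction $f|_{X_i}$ is simultaneously Lipschitz, with some constant $L_i$, and monotone. It then suffices to bound $\mu(E_n\cap X_i)$ on each piece and sum.

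The core estimate is $\mu(E_n)\le C\,M_n+C'\,p(n)$ for constants independent of $n$; this is where the decay of correlations enters. To produce genuine preimage sets to which Definition \ref{def1} applies, I would localize: partition each $X_i$ into short consecutive subintervals $\{I\}$ and, on each $I$, replace the moving target by a fixed one. Since $f|_{X_i}$ is $L_i$--Lipschitz, for $x\in I$ the centres $f(x)$ lie in an interval of length $\le L_i|I|$, so
\begin{equation*}
E_n\cap I\subseteq I\cap T^{-n}J_I,\qquad J_I:=\bigcup_{x\in I}B\bigl(f(x),r_n(x)\bigr),
\end{equation*}
where $J_I$ is a fixed interval whose length is controlled by $L_i|I|$ together with $\sup_{x\in I}r_n(x)$. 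Applying the correlation bound in the form recorded in the remark after Definition \ref{def1} (take $g=\chi_I$ and pull back $\chi_{J_I}$), I get
\begin{equation*}
\mu(E_n\cap I)\le\mu(I\cap T^{-n}J_I)\le\mu(J_I)\,\mu(I)+3\,\mu(J_I)\,p(n),
\end{equation*}
and summing over the partition yields $\mu(E_n)\le\sum_I\mu(J_I)\mu(I)+3p(n)\sum_I\mu(J_I)$.

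It remains to control the two sums. For the error term I would use monotonicity: if $f|_{X_i}$ is, say, increasing, then the images $f(I)$ are ordered and essentially disjoint as $I$ ranges over the subintervals of $X_i$, so choosing $|I|$ at the scale that makes the length of $f(I)$ comparable to $\sup_{x\in I}r_n(x)$ forces the fattened targets $J_I$ to have bounded overlap. Hence $\sum_I\mu(J_I)=O(1)$, and the error term contributes $O(p(n))$, which is summable because $\sum_n p(n)<\infty$. For the main term one wants $\sum_I\mu(J_I)\mu(I)\le C\,M_n$, i.e. on average the twisted target has measure comparable to $M_n$; granting this, $\sum_n\mu(E_n)\le C\sum_n M_n+C'\sum_n p(n)<\infty$, and Borel--Cantelli finishes the proof.

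The main obstacle is precisely this last comparison. The radius $r_n(x)$ is calibrated by $\mu\bigl(B(x,r_n(x))\bigr)=M_n$, that is, at the point $x$, whereas the target ball is centred at the twisted point $f(x)$; so the quantity that actually governs the main term is $\mu\bigl(B(f(x),r_n(x))\bigr)$, and one must bound its $\mu$--average over $x$ by a fixed multiple of $M_n$. This is where the Lipschitz bound on $f$ (which keeps $f(x)$ from straying too far and lets $r_n$ be treated as locally constant on each $I$) and the regularity of $\mu$ must be combined; under a comparability hypothesis of Ahlfors type one has $\mu(B(f(x),r_n(x)))\le c\,r_n(x)^s\le c^2 M_n$ directly, and the monotone structure then packages the local estimates into the global bound $\sum_I\mu(J_I)\mu(I)\le C M_n$. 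Carrying out this target--measure estimate uniformly in $n$, and verifying that the bounded--overlap choice of the subintervals $I$ is compatible with the variation of $r_n(\cdot)$ across each piece, is the step I expect to demand the most care.
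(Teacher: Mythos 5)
Your overall frame (first--moment Borel--Cantelli, reduction to pieces where $f$ is simultaneously Lipschitz and monotone, an estimate $\mu(E_n)\le C M_n+C'p(n)$ via decay of correlations) matches the paper, but the two sums you leave open are genuine gaps, and the first one is unfixable under the theorem's hypotheses as you have set things up. You correctly flag that your main term requires $\mu\bigl(B(f(x),r_n(x))\bigr)\lesssim M_n$, and you propose to get this from Ahlfors-type regularity --- but Theorem \ref{thm1} assumes no regularity of $\mu$ whatsoever (upper Ahlfors regularity enters only in Theorem \ref{thm2}), so your argument would prove a strictly weaker statement. The paper sidesteps the issue entirely: in its proof the radius of the target ball is calibrated at the twisted center, not at $x$ --- see the set $Y_n=\{(x,y):y\in B(f(x),r_n(f(x)))\}$ in \S3 --- so that $\mu\bigl(B(f(x),r_n(f(x)))\bigr)=M_n$ exactly, by the very definition of $r_n$, and the main term $\int F_n\,d\mu\,d\mu$ equals $M_n$ on the nose. (The paper's displayed definition of $R_n$ uses $r_n(x)$ while its proof uses $r_n(f(x))$; this is an internal inconsistency, and only the latter convention makes the proof, and the absence of a regularity hypothesis, coherent. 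Under that convention your ``main obstacle'' dissolves.)

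The second gap is your bounded-overlap claim $\sum_I\mu(J_I)=O(1)$. Monotone Lipschitz includes constant (and nearly constant) pieces --- indeed $f\equiv y$ is the shrinking-target specialization the theorem is meant to cover --- and there $|f(I)|=0$, so you cannot choose $|I|$ at the scale where $|f(I)|\asymp\sup_I r_n$; all the $J_I$ essentially coincide and $\sum_I\mu(J_I)$ grows like $(\#I)\,M_n$, unbounded as the partition refines. More generally the overlap multiplicity is of order $\sup_I r_n/|f(I)|$, which is uncontrolled for flat $f$. The paper avoids all overlap counting by discretizing the \emph{other} coordinate: it freezes the target variable into intervals $I_h$ and applies Definition \ref{def1} with $\chi_{I_h}\circ T^n$ in the $L^1$ slot (so the norms sum to $\sum_h\mu(I_h)=1$) against the BV function $x\mapsto\chi_{\{x:\,d(f(x),y_h)<r_n(f(x))\}}$, whose BV norm is at most $3$ because monotonicity of $f$ together with the $1$-Lipschitzness of $r_n$ (their Lemma \ref{lemma3}, a fact you would also need) makes this set an interval; the total correlation error is then $3p(n)$ with no regularity of $\mu$ and no overlap lemma, yielding $|\mu(R_n)-M_n|\le 3p(n)$ and then Borel--Cantelli exactly as in your closing step. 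Your domain-side discretization with $\mu(J_I)\le M_n+O((L/\ell)^s)$ is in fact the paper's technique for the \emph{divergence} part (Lemma \ref{lemma6}), where upper Ahlfors regularity is assumed precisely to make it work. To repair your proof: swap which coordinate you freeze, and calibrate the radius at $f(x)$.
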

 \par 
 \smallskip
 For the full measure part, 
  \begin{theorem}\label{thm2}
 	Let $p:\mathbb{N}\to \mathbb{R}^+$, $(X,\mu,T)$, $\{M_n\}_n$ and $f:X\to X$ be as in Theorem \ref{thm1}. Additionally, suppose that 
\begin{itemize} 
	\item There exist $C>0$ and $0<\gamma<1$ such that $p(n)=C\gamma^n$.  
	\item $\mu$ is upper Ahlfors regular.
	\item For any $q>0$, 
	\begin{equation}\label{sum}
		\limsup_N \sum_{n=\lfloor q\log N\rfloor}^N M_n=\infty
	\end{equation}
\end{itemize}
 Then $\mu(R(\{M_n\}_n,f))=1$. 
 \end{theorem}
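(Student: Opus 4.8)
The plan is to realize $R(\{M_n\}_n,f)=\limsup_n A_n$ for the events $A_n:=\{x\in X:T^nx\in B(f(x),r_n(x))\}$ and to run a second–moment (Kochen–Stone/Paley–Zygmund) Borel–Cantelli argument. Because the twist destroys $T$–invariance of $\limsup_n A_n$, I cannot promote positive measure to full measure by ergodicity; instead I will prove a localized lower bound, namely that for every dyadic interval $J\subset X$ a fixed positive proportion of $J$ lies in $\limsup_n A_n$, with the proportion bounded below uniformly in $J$. Since an upper Ahlfors regular measure is doubling, the Lebesgue density theorem then forces $\mu(\limsup_n A_n)=1$. Concretely, within $J$ the inequality
\[
\mu\Big(\limsup_n A_n\cap J\Big)\ \ge\ \limsup_N\frac{\big(\sum_{n\le N}\mu(A_n\cap J)\big)^2}{\sum_{m,n\le N}\mu(A_m\cap A_n\cap J)}
\]
reduces everything to a first–moment lower bound and a quasi–independence (second–moment) upper bound.

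Both moments are estimated by linearizing the twist on a fine partition. Fix a scale $\ell=\ell(N)$ and partition $J$ into intervals $I_j$ of length $\asymp\ell$, refined so that each $I_j$ sits inside a single piece on which $f$ is both Lipschitz and monotone. On $I_j$ with centre $c_j$, \eqref{lip} gives $f(x)=f(c_j)+O(\ell)$, and one checks that $r_n$ is nearly constant on $I_j$, so $B(f(x),r_n(x))$ is trapped between the fixed balls $B(f(c_j),r_n(c_j)\mp O(\ell))$. This replaces $\chi_{A_n}\chi_{I_j}$ by $(\chi_B\circ T^n)\,\chi_{I_j}$, to which I apply Definition \ref{def1} with the ball indicator as the $L^1$ factor and $\chi_{I_j}$ (of bounded variation $\le 3$) as the BV factor. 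Summing over $j$ yields $\mu(A_n\cap J)=\int_J\mu\big(B(f(x),r_n(x))\big)\,d\mu(x)+(\text{correlation error})$. The delicate point in the first moment is that $r_n(x)$ is calibrated by $\mu(B(x,r_n(x)))=M_n$ at the base point $x$ while the ball sits at $f(x)$; here \eqref{uar} supplies $r_n(x)\ge (M_n/c)^{1/s}$ and the piecewise monotone, Lipschitz structure of $f$ controls the preimages $f^{-1}B(\cdot,\rho_n)$, which together give a lower bound $\mu(A_n\cap J)\gtrsim M_n\,\mu(J)$ (up to the correlation error), so that $\sum_{n\le N}\mu(A_n\cap J)\gtrsim \mu(J)\sum_{n\le N}M_n$.

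The crux is the second moment. For $m<n$ write $n=m+k$ and, on $I_j$, linearize to $(\chi_{B_m}\circ T^m)(\chi_{B_n}\circ T^n)\chi_{I_j}$. I decouple the two times by two successive applications of Definition \ref{def1}: first factor off $T^m$ against $g=\chi_{I_j}$, which costs an error $\le\|\chi_{B_m}\cdot(\chi_{B_n}\circ T^k)\|_{L^1}\cdot 3\,p(m)$ per interval; then estimate $\int(\chi_{B_n}\circ T^k)\chi_{B_m}\,d\mu\le M_mM_n+3M_n\,p(k)$. Summing over the $O(1/\ell)$ intervals gives the schematic bound
\[
\mu(A_m\cap A_n\cap J)\ \lesssim\ \big(M_mM_n+M_n\,p(k)\big)\,\mu(J)\Big(1+\tfrac{p(m)}{\ell}\Big)+(\text{shell errors from the twist}).
\]
Here is the main obstacle: the localization cost $1/\ell$ (the number of BV pieces) multiplies the correlation decay $p(m)=C\gamma^m$, while linearizing the twist demands $\ell$ small relative to the radii $r_n$ in play; since \eqref{uar} gives $r_n\gtrsim M_n^{1/s}$, one may take $\ell$ polynomially small, say $\ell\asymp N^{-a}$, and then $p(m)/\ell\asymp\gamma^m N^{a}\to 0$ precisely once $m\gtrsim q\log N$ with $q\asymp a/\log(1/\gamma)$. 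Discarding the indices $m<\lfloor q\log N\rfloor$, where the correlation error is not controllable, one uses \eqref{sum} to guarantee that $\sum_{n=\lfloor q\log N\rfloor}^N M_n$ still diverges along a subsequence of $N$; this makes all error terms $o\big(\mu(J)(\sum_n M_n)^2\big)$ and leaves $\sum_{m,n\le N}\mu(A_m\cap A_n\cap J)\lesssim \mu(J)\big(\sum_{n\le N}M_n\big)^2$, with implied constant depending only on $C,\gamma,s$ and the Lipschitz constant of $f$. Feeding the two moment bounds into the displayed inequality yields $\mu(\limsup_n A_n\cap J)\ge c\,\mu(J)$ with $c>0$ uniform in $J$, and the density argument upgrades this to $\mu(R(\{M_n\}_n,f))=1$.
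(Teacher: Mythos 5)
Your core machinery coincides with the paper's: the paper's Lemma \ref{lemma5} is your first moment (approximating the indicator of $Y_n=\{(x,y):y\in B(f(x),r_n(f(x)))\}$ by step functions in the $y$-variable and using that $x\mapsto \chi_{B(f(x),r_n(f(x)))}(y_h)$ has BV norm $\leq 3$ thanks to monotonicity of $f$ and $1$-Lipschitzness of $r_n$), and its Lemma \ref{lemma6} is exactly your double decorrelation over a fine partition, with shell errors controlled by \eqref{uar}, followed by restriction to the window $I_N=\{j:-\frac{2}{s}\log_\gamma N\leq j\leq N\}$, hypothesis \eqref{sum}, and Chung--Erd\H{o}s. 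The one structural difference is that your localization-to-$J$ plus density-theorem superstructure is unnecessary: in Lemma \ref{lemma6} the paper chooses the partition scale \emph{adapted to the pair}, namely $L/\ell\asymp\sqrt{p(n)}$ with $n$ the smaller index (rather than your uniform $\ell\asymp N^{-a}$), which yields $\mu(R_n\cap R_{n+m})\leq M_nM_{n+m}(1+K_1\sqrt{p(n)})+(\text{summable errors})$, i.e.\ quasi-independence with constant $1+o(1)$. Combined with the window, this makes the Chung--Erd\H{o}s ratio itself tend to $1$, so $\limsup_N\mu(U_N)\geq 1$ and full measure falls out directly, with no need to beat an ergodicity obstruction. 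Your localized variant could be made to work, but note that your justification for the density step is wrong as stated: upper Ahlfors regularity does \emph{not} imply doubling (consider $d\mu\propto e^{-1/x}\,dx$ on $[0,1]$, which satisfies \eqref{uar} with $s=1$ but is not doubling at $0$); you would instead invoke the Besicovitch--Vitali density theorem, valid for arbitrary Radon measures on $\mathbb{R}$.

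The genuine gap is in your first moment. You correctly spot the calibration mismatch --- $r_n$ is calibrated by $\mu(B(x,r_n(x)))=M_n$ at the base point while the target ball sits at $f(x)$ --- but your proposed repair fails: under mere \emph{upper} regularity, the radius bound $r_n(x)\geq (M_n/c)^{1/s}$ gives no lower bound whatsoever on $\mu(B(f(x),r_n(x)))$, since there is no lower Ahlfors bound and $\mu$ may give arbitrarily small (even zero) mass near $f(x)$; no amount of Lipschitz/monotone control of preimages $f^{-1}B(\cdot,\rho_n)$ recovers $\mu(A_n\cap J)\gtrsim M_n\,\mu(J)$, so with the literal definition via $r_n(x)$ this step, and hence your lower bound on $\sum_n\mu(A_n\cap J)$, is false in general. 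The paper resolves this by (implicitly) taking the target to be $B(f(x),r_n(f(x)))$ --- this is how $Y_n$ is defined in the proof of Lemma \ref{lemma5} --- so that the target ball has $\mu$-measure \emph{exactly} $M_n$ by the definition of $r_n$, and the first moment becomes $|\mu(R_n)-M_n|\leq 3p(n)$ with no regularity assumption at all. With that convention adopted (and the density step rerouted through Besicovitch, or dropped in favor of the adapted-scale quasi-independence above), the rest of your outline --- decoupling at times $m$ and $m+k$ with BV cost $3p(m)$ per interval, discarding indices below $\lfloor q\log N\rfloor$, and using \eqref{sum} to keep $\sigma_N\to\infty$ along a subsequence --- is sound and matches the paper's proof.
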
\par 
 \smallskip
In \cite{KKP}, the authors proved Theorem \ref{thm1} and Theorem \ref{thm2} for $f$ being the identity map. We will apply some of the ideas and techniques used in the proof in \cite{KKP}, but the proof is different due to the new setup. \par 
 \smallskip
 We get an immediate corollary for Ahlfors regular systems.
 \begin{corollary}\label{cor1}
 	Let $p(n)=C\gamma^n$ for some $C>0$ and $0<\gamma<1$ and suppose the correlations for $(X,\mu,T)$ decays as $p$ for $L^1$ against BV. Suppose $\mu$ is $\delta$-Ahlfors regular.  Assume $f$ is piecewise Lipschitz and piecewise monotone. Then 
 	\begin{enumerate}
 		\item If $\sum_{n=1}^\infty \psi(n)^\delta<\infty$, then $R(\psi,f)$ is null. 
 		\item If for any $q>0$, 
		\begin{equation}
		\limsup_N \sum_{n=\lfloor q\log N\rfloor}^N M_n=\infty, \label{sum2}
		\end{equation} then $R(\psi,f)$ is full. 
 	\end{enumerate}
 \end{corollary}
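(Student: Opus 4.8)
The plan is to deduce both parts directly from Theorems \ref{thm1} and \ref{thm2} by exploiting the identity $R(\{M_n\}_n,f)=R(\psi,f)$ with $\psi(n)=r_n(x)$, which holds once $\mu$ is Ahlfors regular. The one input I would establish first is a uniform comparison between the radii $r_n(x)$ and the masses $M_n$. Since $r_n(x)$ is defined so that $\mu(B(x,r_n(x)))=M_n$, the two-sided bound $\tfrac1c r^\delta\le\mu(B(x,r))\le c\,r^\delta$ forces $\tfrac1c r_n(x)^\delta\le M_n\le c\,r_n(x)^\delta$ for every $x\in\mathrm{Supp}\,X$ and every $n$. Hence $r_n(x)^\delta\asymp M_n$ with constants independent of both $x$ and $n$; equivalently $\psi(n)^\delta\asymp M_n$ uniformly.

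For part (1), I would use this comparison to transfer the summability hypothesis. The uniform bound $M_n\le c\,\psi(n)^\delta$ gives $\sum_n M_n\le c\sum_n\psi(n)^\delta<\infty$ whenever $\sum_n\psi(n)^\delta<\infty$. Since $p(n)=C\gamma^n$ with $0<\gamma<1$ is summable, all hypotheses of Theorem \ref{thm1} are met, so $\mu(R(\{M_n\}_n,f))=0$; by the identification this is exactly $\mu(R(\psi,f))=0$, i.e.\ $R(\psi,f)$ is null.

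For part (2), I would simply check the three bullet conditions of Theorem \ref{thm2}. The exponential decay $p(n)=C\gamma^n$ is assumed in the corollary; $\delta$-Ahlfors regularity contains the upper bound (\ref{uar}), so $\mu$ is in particular upper Ahlfors regular; and the standing hypothesis (\ref{sum2}) is verbatim the divergence condition (\ref{sum}) required by Theorem \ref{thm2}. Theorem \ref{thm2} then yields $\mu(R(\{M_n\}_n,f))=1$, which again by the identification is $\mu(R(\psi,f))=1$, so $R(\psi,f)$ is full.

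The corollary is essentially immediate once the framework of Theorems \ref{thm1} and \ref{thm2} is in place, so I do not expect a genuine obstacle. The only point requiring a little care is the uniformity of the comparison $r_n(x)^\delta\asymp M_n$: the constant $c$ from Ahlfors regularity must not depend on $x$ or $n$, which is precisely what guarantees that the convergence of $\sum_n\psi(n)^\delta$ is equivalent to that of $\sum_n M_n$, and hence that the summability threshold inherited from Theorem \ref{thm1} is the correct one.
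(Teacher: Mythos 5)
Your proposal is correct and is essentially the paper's own argument: the paper presents Corollary \ref{cor1} as an immediate consequence of Theorems \ref{thm1} and \ref{thm2}, via the identification $R(\{M_n\}_n,f)=R(\psi,f)$ under Ahlfors regularity together with exactly the uniform comparison $\frac{1}{c}\,r_n(x)^\delta\leq M_n\leq c\,r_n(x)^\delta$ that you establish. One small refinement (the imprecision is present in the paper as well): since $r_n(x)$ need not be constant in $x$, the literal set equality should be replaced by the two one-sided inclusions your comparison already supplies --- taking $M_n=c\,\psi(n)^\delta$ forces $r_n(x)\geq\psi(n)$ and hence $R(\psi,f)\subseteq R(\{M_n\}_n,f)$ for part (1), while taking $M_n=\frac{1}{c}\,\psi(n)^\delta$ forces $r_n(x)\leq\psi(n)$ and hence $R(\{M_n\}_n,f)\subseteq R(\psi,f)$ for part (2), the divergence condition \eqref{sum2} being unaffected by the constant factor.
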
 \par 
 \smallskip  
We shall remark that Corollary \ref{cor1} is a generalization of the convergence part of \cite[Theorem 1.2]{KZ} in the case $X=[0,1]$. Most notably, the expanding, bounded distortion and conformality assumptions are omitted. Corollary \ref{cor1} partially generalizes the divergence case of \cite[Theorem 1.2]{KZ}, with a stronger summability assumption of the measures of the targets \eqref{sum2}.\par 
 \smallskip
 The structure of the paper is as follows. In \S2, we reduce the proofs of Theorem \ref{thm1} and Theorem \ref{thm2} to the case where $f:X\to X$ is Lipschitz and it only changes monotonicity at finitely many points. In \S3 we construct a sequence of measurable sets whose limsup set is $R(\{M_n\}_n,f)$, and we estimate the measure of each set and conclude Theorem \ref{thm1}. In \S4, we study quasi-independence properties of this sequence and prove Theorem \ref{thm2}.
 
\section*{Acknowledgements}

The author would like to thank Tomas Persson for bringing this problem to his attention and discussing possible generalizations. The author is grateful to Dmitry Kleinbock for his wonderful advice and guidance throughout this project. 
 
 \section{Piecewise Lipschitz and piecewise monotone twists}
 
 By the properties of $R(\psi,f)$, to prove Theorem \ref{thm1} and Theorem \ref{thm2}, it suffices to show the statements for $f$ Lipschitz and monotone. 
 \begin{lemma}\label{lemm1}
 	Theorem \ref{thm1} and Theorem \ref{thm2} hold for $f$ Lipschitz and monotone. 
 \end{lemma}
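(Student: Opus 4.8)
The plan is to realize $R(\{M_n\}_n,f)$ as a $\limsup$ set and run a Borel--Cantelli argument in both directions. Write $A_n := \{x\in X : T^n x \in B(f(x),r_n(x))\}$, so that $R(\{M_n\}_n,f)=\limsup_n A_n$. The whole proof rests on one device for estimating $\mu(A_n)$ (and later $\mu(A_m\cap A_n)$): the target $B(f(x),r_n(x))$ depends on the base point $x$, which blocks a direct use of Definition \ref{def1}, so I would freeze it on short intervals. Fix $n$, let $L$ be the Lipschitz constant of $f$, and partition $[0,1]$ into intervals $\{I_j\}$ of length $\ell_n$. Since $f$ is monotone and Lipschitz, $f(I_j)$ is an interval of length at most $L\ell_n$, so for every $x\in I_j$ the moving ball $B(f(x),r_n(x))$ is trapped between fixed intervals $J_j^-\subseteq B(f(x),r_n(x))\subseteq J_j^+$ obtained by shrinking and enlarging the radius by $L\ell_n$ together with the oscillation of $r_n$ on $I_j$. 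This yields the sandwich
\begin{equation*}
\sum_j \mu\bigl(I_j\cap T^{-n}J_j^-\bigr)\le \mu(A_n)\le \sum_j \mu\bigl(I_j\cap T^{-n}J_j^+\bigr),
\end{equation*}
and each summand is a correlation $\int(\chi_{J_j^\pm}\circ T^n)\,\chi_{I_j}\,d\mu$ to which Definition \ref{def1} applies, using $\|\chi_{I_j}\|_{\mathrm{BV}}=3$ and $\|\chi_{J_j^\pm}\|_{L^1}=\mu(J_j^\pm)$.

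For Theorem \ref{thm1} (convergence) I would use only the upper sandwich. Definition \ref{def1} gives
\begin{equation*}
\mu(A_n)\le \sum_j \mu(J_j^+)\,\mu(I_j)+3\,p(n)\sum_j \mu(J_j^+).
\end{equation*}
The main term is comparable to the averaged target measure $\int \mu(B(f(x),r_n(x)))\,d\mu(x)$, which I would compare to $M_n=\mu(B(x,r_n(x)))$; it should be $O(M_n)$ once $\ell_n$ is chosen small enough that the enlargement $J_j^+$ does not inflate measure beyond a constant factor, while monotonicity of $f$ keeps the covering multiplicity of the $\{J_j^+\}$ bounded so that $\sum_j\mu(J_j^+)$ stays controlled. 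Summing over $n$ and invoking $\sum_n M_n<\infty$ and $\sum_n p(n)<\infty$ gives $\sum_n\mu(A_n)<\infty$, and the convergence Borel--Cantelli lemma yields $\mu(R(\{M_n\}_n,f))=0$.

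For Theorem \ref{thm2} (divergence) I would use the lower sandwich together with the two extra hypotheses. Upper Ahlfors regularity \eqref{uar} is what converts the geometric radius $r_n(x)$ into honest measure estimates, and it is what I would use to pin $\mu(J_j^-)$ from below by a constant multiple of $M_n$, giving $\mu(A_n)\gtrsim M_n-p(n)(\cdots)$; with $p(n)=C\gamma^n$ summable this forces $\sum_n\mu(A_n)=\infty$ on the scales appearing in \eqref{sum}. The heart of this half is a quasi-independence estimate: for $m<n$ I would refreeze both targets on a common partition and apply Definition \ref{def1} at lag $n-m$ to get $\mu(A_m\cap A_n)\le \mu(A_m)\mu(A_n)+C\gamma^{\,n-m}(\cdots)$. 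Splitting pairs into a far range $n-m\ge q\log N$, where $\gamma^{\,n-m}\le N^{-q\log(1/\gamma)}$ makes the error sum negligible, and a near-diagonal range $n-m<q\log N$, whose total mass is controlled by upper regularity, I would feed the resulting variance bound into a Kochen--Stone / Paley--Zygmund type divergence Borel--Cantelli lemma. Condition \eqref{sum}, with its lower summation limit $\lfloor q\log N\rfloor$, is engineered precisely so that the divergent part of $\sum\mu(A_n)$ survives after the near-diagonal terms are discarded. To promote positive measure to full measure I would localize the whole estimate to an arbitrary ball $B$ --- decay of correlations gives $\mu(A_n\cap B)\gtrsim \mu(A_n)\mu(B)$ --- so that $\limsup_n A_n$ meets every ball in positive proportion, whence the Lebesgue density theorem forces $\mu(R(\{M_n\}_n,f))=1$.

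The main obstacle, in both halves, is controlling the effect of the twist on target measure: because $r_n(x)$ is calibrated to the ball centered at $x$ while the actual target is centered at $f(x)$, the quantity $\mu(B(f(x),r_n(x)))$ need not equal $M_n$, and this is exactly where the interplay of monotonicity, the Lipschitz bound, and (for the divergence half) upper Ahlfors regularity must be exploited to keep the frozen target measures within constant factors of $M_n$. The secondary difficulty is the near-diagonal regime $n-m<q\log N$ of the quasi-independence estimate, where exponential decay of correlations gives no information and these terms must be absorbed into the variance by hand; reconciling this with the divergence of $\sum_n\mu(A_n)$ is the reason \eqref{sum} is stated with a logarithmic cutoff rather than as plain divergence of $\sum_n M_n$.
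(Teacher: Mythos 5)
There is a genuine gap, and it sits in your convergence half. You partition the $x$-axis and apply Definition \ref{def1} with $\chi_{I_j}$ in the BV slot and $\chi_{J_j^{+}}$ in the $L^1$ slot, so your total error is $3p(n)\sum_j\mu(J_j^{+})$. That sum is of order $M_n/\ell_n$ and blows up as the partition refines, and your claim that monotonicity of $f$ bounds the covering multiplicity of $\{J_j^{+}\}$ is false: each $J_j^{+}$ has length about $2r_n(x)$, while consecutive centers $f(x_j)$ are within $L\ell_n$ of each other, so the multiplicity is of order $r_n/(L\ell_n)$ --- take $f$ monotone and nearly constant and all the $J_j^{+}$ essentially coincide. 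The only way to rescue your orientation is to stop refining, balance $\ell_n\sim\sqrt{p(n)}$, and control the enlargement $\mu(J_j^{+})\le M_n+2c(L\ell_n)^s$ via upper Ahlfors regularity --- but then you need \eqref{uar} and $\sum_n p(n)^{s/2}<\infty$, neither of which Theorem \ref{thm1} assumes (it has only $\sum p(n)<\infty$ and no regularity). The paper's Lemma \ref{lemma5} avoids this with a swap you are missing: it discretizes the \emph{target} variable, smoothing the indicator of the open set $Y_n=\{(x,y):y\in B(f(x),r_n(f(x)))\}$, partitioning $y$-space into $\{I_h\}$, and applying Definition \ref{def1} with $\chi_{I_h}$ in the $L^1$ slot --- so the errors sum to exactly $3p(n)$ because $\sum_h\mu(I_h)=1$ --- while the section $x\mapsto\chi_{\{d(f(x),y_h)<r_n(f(x))\}}$ goes in the BV slot with norm $\le 3$ \emph{uniformly in the fineness of the partition}: monotonicity of $f$ plus the $1$-Lipschitz property of $r_n$ (Lemma \ref{lemma3}) makes $x\mapsto f(x)\pm r_n(f(x))$ monotone, hence each section an interval. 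This gives $|\mu(R_n)-M_n|\le 3p(n)$ with no regularity at all, which is what the Borel--Cantelli step actually requires.

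Your divergence half, by contrast, is essentially the paper's: Lemma \ref{lemma6} performs exactly your refreezing on an even partition at scale $L/\ell\sim\sqrt{p(n)}$, applies Definition \ref{def1} twice (at lag $n$ and at the gap $m$), and uses \eqref{uar} only to absorb the enlargement, giving quasi-independence with multiplicative constant $1+K_1\sqrt{p(n)}$; your reading of why \eqref{sum} carries a logarithmic cutoff is also correct, though the paper implements it by starting the block $I_N$ at $-\frac{2}{s}\log_\gamma N$ rather than by splitting pairs. Two smaller deviations: the paper needs no Paley--Zygmund-plus-density upgrade, since Chung--Erd\H{o}s applied to $U_N=\bigcup_{j\in I_N}R_j$ yields a ratio tending to $1$ along the subsequence where $\sigma_N\to\infty$, so full measure falls out directly; and the lower bound $\mu(R_n)\ge M_n-3p(n)$ is again Lemma \ref{lemma5}, not an Ahlfors-regularity estimate. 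Finally, your flagged ``main obstacle'' --- that $\mu(B(f(x),r_n(x)))$ need not equal $M_n$ --- is something the paper's proof never confronts: its $Y_n$ calibrates the radius at the twisted point, using $r_n(f(x))$, so the target has measure exactly $M_n$ by the definition of $r_n$. You should adopt that calibration (or note the discrepancy between the introduction's $r_n(x)$ and the proof's $r_n(f(x))$) rather than fight a difficulty the argument is designed to define away.
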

 
 We will prove Lemma \ref{lemm1} in \S\S3-4. Here we first conclude Theorem \ref{thm1} and Theorem \ref{thm2} from this lemma. 
 
 \begin{proof}[Proofs of Theorem \ref{thm1} and Theorem \ref{thm2}]
 	Suppose there exists a countable collection of disjoint open intervals $\{X_i=(a_i,b_i)\}_{i\in \mathcal{I}}$ so that $\bigcup_{i\in \mathcal{I}}X_i$ is full and $f$ is Lipschitz and monotone on each $X_i$. Then for each $i\in \mathcal{I}$, $f$ is bounded on $X_i$ and hence we can define 
 	\begin{equation*}
 		f_i(x)=\begin{cases}
 			f(x) & \text{if }x\in (a_i,b_i)\\
 			\lim_{x\to a_i^-}f(x) & \text{if }x\leq a_i\\
 			\lim_{x\to b_i^+}f(x) & \text{if }x\geq b_i.
 		\end{cases}
 	\end{equation*}
 	Then $f_i$ is Lipschitz and monotone. By Lemma \ref{lemm1}, Theorem \ref{thm1} and Theorem \ref{thm2} hold for $f_i$ for each $i\in \mathcal{I}$. When $\mu(R(\psi,f_i))=0$ for all $i\in \mathcal{I}$, 
 	\begin{equation*}
 		\mu(R(\psi,f))=\sum_{i\in \mathcal{I}}\mu(R(\psi,f)\cap X_i)=\sum_{i\in \mathcal{I}}\mu(R(\psi,f_i)\cap X_i)=0;
 	\end{equation*} when $\mu(R(\psi,f_i))=1$ for all $i\in \mathcal{I}$, 
 	\begin{equation*}
 		\mu(R(\psi,f))=\sum_{i\in \mathcal{I}}\mu(R(\psi,f)\cap X_i)=\sum_{i\in \mathcal{I}}\mu(R(\psi,f_i)\cap X_i)=\sum_{i\in \mathcal{I}}\mu(X_i)=1,
 		\end{equation*}
 		so the theorems are proved once we prove Lemma \ref{lemm1} in the following sections. 
 \end{proof}

 \section{The convergence part}
 
In this section we prove the convergence part of Lemma \ref{lemm1}, thereby fixing $p:\mathbb{N}\to \mathbb{R}^+$, $(X,\mu,T)$, $\{M_n\}_n$ and $f:X\to X$  to be Lipschitz and monotone. Let us define 
\begin{equation*}
R_n(\{M_n\}_n,f):=\{x\in X:T^nx\in B(f(x),r_n(x))\}.
\end{equation*}
Without ambiguity, we shall denote $R_n(\{M_n\}_n,f)$ simply by $R_n$. Clearly $R(\{M_n\}_n,f)=\limsup_{n\to\infty }R_n$. \par 
\smallskip
We first prove a fact about the functions $r_n$.
\begin{lemma}\label{lemma3}
	For all $n\in \mathbb{N}$, $r_n$ is $1$-Lipschitz. 
\end{lemma}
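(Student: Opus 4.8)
The plan is to fix $n$ and abbreviate $M=M_n$ and $r=r_n$, and then prove the two-sided estimate $|r(x)-r(y)|\le |x-y|$ for arbitrary $x,y\in X$; by the symmetry of the roles of $x$ and $y$ it suffices to establish $r(y)\le r(x)+|x-y|$. The whole argument rests on one elementary geometric fact, the triangle-inequality inclusion of balls
\[
B(x,t)\subseteq B\bigl(y,\,t+|x-y|\bigr),
\]
valid because $d(z,x)<t$ forces $d(z,y)\le d(z,x)+d(x,y)<t+|x-y|$. Monotonicity of $\mu$ then gives $\mu(B(x,t))\le \mu\bigl(B(y,t+|x-y|)\bigr)$ for every $t$.

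The key auxiliary observation I would isolate first is a reformulation of the defining infimum. Since $t\mapsto \mu(B(x,t))$ is non-decreasing,
\[
r(x)=\inf\{t:\mu(B(x,t))=M\}=\inf\{t:\mu(B(x,t))\ge M\}.
\]
The inequality ``$\le$'' is immediate. For the reverse one checks that $\mu(B(y,s_0))\ge M$ already forces $r(y)\le s_0$: if $\mu(B(y,s_0))=M$ then $s_0$ lies in the level set, while if $\mu(B(y,s_0))>M$ then monotonicity places the entire level set (nonempty, by the standing assumption that the $r_n$ exist) strictly below $s_0$. This is the step where care is needed, because the measure of a ball may be neither continuous nor strictly increasing in the radius, so the infimum need not be attained and $\mu(B(x,r(x)))$ need not equal $M$; the ``$\ge M$'' characterization is precisely what sidesteps any such regularity assumption.

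With these two facts in hand the conclusion is short. Given $\varepsilon>0$, nonemptiness of the level set at $x$ provides a radius $t<r(x)+\varepsilon$ with $\mu(B(x,t))=M$. The inclusion together with monotonicity gives $\mu\bigl(B(y,t+|x-y|)\bigr)\ge M$, and the reformulation applied at $y$ then yields $r(y)\le t+|x-y|<r(x)+|x-y|+\varepsilon$. Letting $\varepsilon\to 0$ gives $r(y)\le r(x)+|x-y|$, and exchanging $x$ and $y$ completes the proof. I expect the only genuine obstacle to be the bookkeeping around non-attainment of the infimum in the auxiliary observation; once that reformulation is in place, the Lipschitz bound is a direct consequence of the triangle inequality and holds with constant exactly $1$.
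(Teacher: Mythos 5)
Your proof is correct and follows essentially the same route as the paper's: the triangle-inequality inclusion $B(x,t)\subseteq B(y,t+|x-y|)$ combined with monotonicity of $\mu$ and the definition of $r_n$ as an infimum. If anything, you are more careful than the paper, whose one-line argument implicitly uses $\mu(B(x,r_n(x)))\geq M_n$ (i.e., attainment of the infimum); your $\varepsilon$-approximation and the reformulation $r_n(x)=\inf\{t:\mu(B(x,t))\geq M_n\}$ close that small gap without any extra hypotheses.
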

\begin{proof}
	Let $x,y\in [0,1]$. Without the loss of generality, suppose $r_n(x)\leq r_n(y)$. Then 
	\begin{align*}
		B(x,r_n(x))\subset B(y,r_n(x)+d(x,y)),
	\end{align*} so $\mu(B(y,r_n(x)+d(x,y)))\geq M_n$ and hence
	\begin{align*}
		r_n(y)\leq r_n(x)+d(x,y),
	\end{align*}
	i.e., $|r_n(y)-r_n(x)|\leq |x-y|$. 
\end{proof}
\smallskip
For each $n$, we define $Y_n$ to be a subset of $[0,1]^2$ such that 
\begin{equation*}
	Y_n=\{(x,y):y\in B(f(x),r_n(f(x)))\}.
\end{equation*}
Then we have 
\begin{lemma}
	For each $n\in \mathbb{N}$, $Y_n$ is an open subset of $[0,1]^2$. 
\end{lemma}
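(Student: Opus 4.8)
The plan is to exhibit $Y_n$ as the preimage of an open half-line under a continuous real-valued function on $[0,1]^2$. Unwinding the definition, a pair $(x,y)$ lies in $Y_n$ exactly when $|y-f(x)| < r_n(f(x))$, so if I define
\begin{equation*}
	g(x,y) = |y-f(x)| - r_n(f(x)),
\end{equation*}
then $Y_n = g^{-1}\big((-\infty,0)\big)$. Since $(-\infty,0)$ is open in $\mathbb{R}$, it suffices to show that $g$ is continuous on $[0,1]^2$; openness of $Y_n$ in the subspace topology of $[0,1]^2$ follows at once, as the preimage of an open set under a continuous map is open.

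The continuity of $g$ I would establish by checking its two constituent terms separately. For the first term, $f$ is Lipschitz throughout this section and hence continuous, so $(x,y)\mapsto y-f(x)$ is continuous on $[0,1]^2$, and composing with the (continuous) absolute value shows $(x,y)\mapsto |y-f(x)|$ is continuous. For the second term, the decisive input is Lemma \ref{lemma3}, which gives that $r_n$ is $1$-Lipschitz and therefore continuous; composing the continuous $r_n$ with the continuous $f$ yields continuity of $x\mapsto r_n(f(x))$, and hence of $(x,y)\mapsto r_n(f(x))$ as a function on $[0,1]^2$. A difference of continuous real-valued functions is continuous, so $g$ is continuous and the claim follows.

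I do not expect a genuine obstacle in this argument; it is essentially formal once the right reformulation is in place. The one point deserving care is the continuity of the composite $x\mapsto r_n(f(x))$ — note that the radius is evaluated at $f(x)$ rather than at $x$ — and this is precisely where Lemma \ref{lemma3} is used. Should one prefer to avoid the language of continuous preimages, the same conclusion can be reached by a direct argument: starting from a point $(x_0,y_0)\in Y_n$ with $|y_0-f(x_0)| < r_n(f(x_0))$, one uses the Lipschitz bounds for $f$ and for $r_n$ to control the variation of both $|y-f(x)|$ and $r_n(f(x))$ over a small ball about $(x_0,y_0)$, producing an explicit radius on which the strict inequality persists; this is merely the continuity estimate written out.
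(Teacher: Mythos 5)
Your proof is correct and rests on exactly the same ingredients as the paper's: continuity of $f$ and the $1$-Lipschitz property of $r_n$ from Lemma \ref{lemma3}, applied to the defining inequality $|y-f(x)|<r_n(f(x))$. The paper merely packages the same continuity estimate differently, showing the complement is sequentially closed via a triangle-inequality computation, whereas you phrase it as openness of the preimage $g^{-1}\bigl((-\infty,0)\bigr)$ for the continuous function $g(x,y)=|y-f(x)|-r_n(f(x))$; these are the same argument in substance.
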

\begin{proof}
	Fix $n\in \mathbb{N}$. We prove that $Y_n$ has closed complement in $[0,1]^2$. Let $\{(x_m,y_m)\}_m$ be a Cauchy sequence in the complement of $Y_n$ and we denote its limit in $[0,1]^2$ by $(x,y)$. We show that $(x,y)\not \in Y_n$. Let $\varepsilon>0$. Since $f$ is continuous, there exists $k\in \mathbb{N}$ so that 
	\begin{align*}
		|f(x_k)-f(x)|<\frac{\varepsilon}{L}\quad \text{and}\quad |y_k-y|<\varepsilon.
	\end{align*}
	Then 
	\begin{align*}
		|f(x)-y|\geq & |f(x_k)-y_k|-|f(x_k)-f(x)|-|y_k-y|\\
		 \geq & r_n(f(x_k))-2\varepsilon \\
		 \underset{\text{Lemma }\ref{lemma3}}\geq & r_n(f(x))-3\varepsilon.
	\end{align*}
	Since $\varepsilon$ is chosen arbitrarily, we must have $|f(x)-y|\geq r_n(f(x))$ as desired. 
\end{proof}\par 
\smallskip
Now we are ready to estimate the measure of $R_n$ for each $n$. 
\begin{lemma}\label{lemma5}
	For each $n\in \mathbb{N}$, 
	\begin{equation*}
		|\mu(R_n)-M_n|\leq 3p(n). 
	\end{equation*}
\end{lemma}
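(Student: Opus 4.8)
The plan is to realize $\mu(R_n)$ as a single integral over the graph of $T^n$ and compare it with its ``decoupled'' counterpart, the deviation being supplied by the decay of correlations. Recalling that $R_n=\{x:(x,T^nx)\in Y_n\}$, I would begin by writing $\mu(R_n)=\int_X \chi_{Y_n}(x,T^nx)\,d\mu(x)$ and observe that the corresponding product integral is exactly the target value: $(\mu\times\mu)(Y_n)=\int_X \mu\big(B(f(x),r_n(f(x)))\big)\,d\mu(x)=\int_X M_n\,d\mu(x)=M_n$, using that each $x$-slice of $Y_n$ is a ball of measure $M_n$ by the definition of $r_n$. The naive way to bridge these two quantities --- cutting $Y_n$ into product rectangles $E_j\times F_j$ and applying Definition \ref{def1} to each $\int(\chi_{F_j}\circ T^n)\chi_{E_j}\,d\mu$ --- is doomed, because the accumulated error $3p(n)\sum_j\mu(F_j)$ grows with the number of pieces. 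The whole point is to arrange the decomposition so that the total variation appearing in the error stays bounded by a constant.

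The key observation, and the only place the hypotheses on $f$ are genuinely used, is that monotonicity of $f$ forces \emph{both} endpoints of the target interval to be monotone. Writing $a(x)=f(x)-r_n(f(x))$ and $b(x)=f(x)+r_n(f(x))$, I would note that since $r_n$ is $1$-Lipschitz (Lemma \ref{lemma3}) the maps $z\mapsto z-r_n(z)$ and $z\mapsto z+r_n(z)$ are both non-decreasing; composing with the monotone $f$ shows $a$ and $b$ are monotone. Consequently, for each fixed $y$ the slice $\{x:(x,y)\in Y_n\}=\{x:a(x)<y\}\cap\{x:b(x)>y\}$ is the intersection of two intervals (one abutting $0$ and one abutting $1$ when $f$ is non-decreasing, and symmetrically otherwise), hence itself an interval. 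This is precisely what keeps every $\mathrm{BV}$ norm below $3$ no matter how fine the partition.

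With this in hand I would slice only in the $y$-variable. Partition $[0,1]$ into consecutive intervals $J_1<\cdots<J_m$ and set $E_l^{\mathrm{in}}=\{x:J_l\subseteq(a(x),b(x))\}$ and $E_l^{\mathrm{out}}=\{x:(a(x),b(x))\cap J_l\neq\emptyset\}$, each of which is an interval by the previous step, so $\|\chi_{E_l^{\mathrm{in}}}\|_{\mathrm{BV}},\,\|\chi_{E_l^{\mathrm{out}}}\|_{\mathrm{BV}}\le 3$. Then $\bigsqcup_l E_l^{\mathrm{in}}\times J_l\subseteq Y_n\subseteq\bigcup_l E_l^{\mathrm{out}}\times J_l$, which sandwiches $\mu(R_n)$ between $\sum_l\int(\chi_{J_l}\circ T^n)\chi_{E_l^{\mathrm{in}}}\,d\mu$ and $\sum_l\int(\chi_{J_l}\circ T^n)\chi_{E_l^{\mathrm{out}}}\,d\mu$. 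Applying Definition \ref{def1} term by term (with $\chi_{J_l}$ as the $L^1$ factor and $\chi_{E_l^{\bullet}}$ as the $\mathrm{BV}$ factor, $\bullet\in\{\mathrm{in},\mathrm{out}\}$) gives
\[
\Big|\sum_l \int(\chi_{J_l}\circ T^n)\,\chi_{E_l^{\bullet}}\,d\mu-\sum_l \mu(J_l)\mu(E_l^{\bullet})\Big|\le 3p(n)\sum_l \mu(J_l)=3p(n),
\]
so the error is $3p(n)$ \emph{independently} of $m$.

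Finally I would refine the partition. Since $\mu$ is atomless --- the defining property of $r_n$ forces $r\mapsto\mu(B(x,r))$ to be continuous --- the graphs of the monotone functions $a$ and $b$ are $\mu\times\mu$-null, so both $\sum_l\mu(J_l)\mu(E_l^{\mathrm{in}})$ and $\sum_l\mu(J_l)\mu(E_l^{\mathrm{out}})$ converge to $(\mu\times\mu)(Y_n)=M_n$ as $m\to\infty$. Combining the sandwich with the uniform error bound then yields $M_n-3p(n)\le\mu(R_n)\le M_n+3p(n)$, which is the claim. The substantive obstacle is entirely the monotone-slice observation of the second paragraph; the sandwiching and the passage to the limit are routine Fubini-and-monotone-convergence bookkeeping.
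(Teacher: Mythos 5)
Your proof is correct, and its engine is the same as the paper's: slice $Y_n$ only in the $y$-variable, and use monotonicity of $f$ together with the $1$-Lipschitz property of $r_n$ (Lemma \ref{lemma3}) to see that each $y$-slice of $Y_n$ is an interval, so that every BV factor fed into Definition \ref{def1} has norm at most $3$, making the accumulated error $3p(n)\sum_l \mu(J_l) = 3p(n)$ independently of the mesh --- exactly the paper's key point. Where you genuinely differ is in the approximation bookkeeping. The paper first replaces $\chi_{Y_n}$ by continuous ramp functions $F_{n,k}$ (using openness of $Y_n$ and monotone convergence) and then discretizes $F_{n,k}$ in $y$ by step functions; you instead sandwich $Y_n$ directly between inner and outer unions of rectangles $E_l^{\mathrm{in}}\times J_l$ and $E_l^{\mathrm{out}}\times J_l$ and refine, using atomlessness of $\mu$ (which you correctly extract from the standing assumption that $r_n$ exists for every $M_n\in(0,1)$) to show both decoupled sums converge to $(\mu\times\mu)(Y_n)=M_n$. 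Your route buys a small but real gain in rigor: you apply the decay of correlations only to genuine characteristic functions of intervals, where the bound $\norm{\cdot}_{\mathrm{BV}}\le 3$ is immediate, whereas the paper asserts that $F_{n,k}(\cdot,y_h)$ \emph{equals} the characteristic function $\chi_{\{x:\,d(f(x),y_h)<r_n(f(x))\}}$, which is not literally true of a ramp function taking intermediate values and needs an extra word to justify its BV bound; conversely, your limiting step requires the Fubini observation that the endpoint graphs $y=a(x)$, $y=b(x)$ are $\mu\times\mu$-null, which is where the paper's openness lemma plays the analogous role. Two cosmetic points to tidy up: take the $J_l$ half-open so the inner rectangles are genuinely disjoint, and note that the identity $\mu(B(f(x),r_n(f(x))))=M_n$ (rather than merely $\le M_n$) also rests on the atomlessness you established --- neither affects the argument.
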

\begin{proof}
	Define $F_n:[0,1]^2\to \mathbb{R}$ to be the characteristic function of $Y_n$. Since $Y_n$ is open, we can approximate $F_n$ by the following a sequence of uniformly continuous functions $\{F_{n,k}\}_k$, where 
	\begin{align*}
		F_{n,k}(x,y):=\begin{cases}
			0 & \text{if }(x,y)\not\in Y_n\\
			\min\{1,kd((x,y),\partial Y_n) & \text{if }(x,y)\in Y_n
		\end{cases} 
	\end{align*}and $\partial Y_n$ denotes the boundary of $Y_n$. Note that $\{F_{n,k}\}_k$ is increasing in $k$ and it converges pointwise to $F_n$, so by the monotone convergence theorem, for each $\varepsilon>0$, there exists some $k$ so that \begin{equation*}
		\left|\int F_n(x,T^nx)\,d\mu(x)-F_{n,k}(x,T^nx)\,d\mu(x)\right|<\varepsilon
	\end{equation*} and 
	\begin{equation*}
		\left|\int F_n-\int F_{n,k}\right|<\varepsilon.
	\end{equation*} Since $F_{n,k}$ is $2k$-Lipschitz, we can choose a partition by intervals $\{I_h\}_{h=0}^{m-1}$ of $[0,1]$ so that 
\begin{equation*}
	\left|F_{n,k}(x,y)-\sum_{h=0}^{m-1}F_{n,k}(x,y_h)\chi_{I_h}(y)\right|<\varepsilon
\end{equation*} for all $x,y\in [0,1]$, where $y_h$ is the middle point of $I_h$. Then consider the integral
\begin{align}\label{eq1}
	\int \sum_{h=0}^{m-1}F_{n,k}(x,y_h)\chi_{I_h}(T^n x)
\end{align}
For each summand in \eqref{eq1}, apply the decay of correlations to get 
\begin{align*}
	\left|\int F_{n,k}(x,y_h)\chi_{I_h}(T^n x)\,d\mu(x)-\int F_{n,k}(x,y_h)\,d\mu(x)\int \chi_{I_h}(x)\,d\mu(x)\right|\\
	\leq \mu(I_h)||F_{n,k}(x,y_h)||_{BV}p(n)\\
\end{align*}
Note that for each $y_h$, 
\begin{equation*}
F_{n,k}(x,y_h)=\begin{cases}
	1 & \text{if }d(f(x),y_h)<r_n(f(x))\\
	0 & \text{else}
\end{cases}=\chi_{\{x:d(f(x),y_h)<r_n(f(x))\}},
\end{equation*}
so $||F_{n,k}(x,y_h)||_{BV}\leq 3$. Then summing over $h$ we get 
\begin{equation*}
	\left|\int \sum_{h=0}^{m-1}F_{n,k}(x,y_h)\chi_{I_h}(T^n x)\,d\mu(x)-\sum_{h=0}^{m-1} \int F_{n,k}(x,y_h)\,d\mu(x)\cdot \mu(I_h)\right|\leq 3p(n).
\end{equation*}
Hence 
\begin{align*}
	&\left|\mu(R_n)-M_n\right|\\
	=&\left|\int F_n(x,T^nx)\,d\mu(x)-\int F_n\,d\mu\,d\mu\right|\\
	\leq & \left|\int F_n(x,T^nx)\,d\mu(x)-\int F_{n,k}(x,T^nx)\,d\mu(x)\right| \\
	& +\left|\int F_{n,k}(x,T^nx)\,d\mu(x)-\sum_{h=0}^{m-1} \int F_{n,k}(x,y_h)\,d\mu(x)\cdot \mu(I_h)\right|\\
	&+ \left|\sum_{h=0}^{m-1} \int F_{n,k}(x,y_h)\,d\mu(x)\cdot \mu(I_h)-\int F_{n,k}\,d\mu\,d\mu\right|\\
	&+ \left|\int F_{n,k}\,d\mu\,d\mu-\int F_{n}\,d\mu\,d\mu\right|\\
	\leq & \varepsilon+(\varepsilon+3p(n))+\varepsilon+\varepsilon. 
\end{align*}
As we let $k\to \infty$, the lemma is proved. 
\end{proof}\par 
\smallskip
The convergence/zero measure case will follow immediately from Lemma \ref{lemma5}. 

\begin{proof}[Proof of Theorem \ref{thm1}]
	By Lemma \ref{lemma5}, $\sum_{n=1}^\infty\mu(R_n)\leq \sum_{n=1}^\infty (M_n+3p(n))<\infty$, so $\mu(R(\{M_n\}_n,f))=0$ by the Borel-Cantelli lemma. 
\end{proof}

\section{The divergence part}

In this section, we prove the divergence part of Lemma \ref{lemm2}, so throughout the section we will assume that $(X,\mu,T)$, $\{M_n\}_n$ and $f:X\to X$ satisfy the conditions stated in Theorem \ref{thm2} and that $\lim_{n\to\infty} p(n)=0$. The key ingredients for the proof of the divergence case are the estimate of the measure of each $R_n$, which was proved in Lemma \ref{lemma5}, and a quasi-independence property of $\{R_n\}_n$, which we will prove next. 

\begin{lemma}\label{lemma6}
There exist positive constants $K_1,K_2$ and $K_3$ such that for all $n,m\in \mathbb{N}$, 
	\begin{equation*}
		\mu(R_n\cap E_{R+m})\leq M_nM_{n+m}(1+K_1\sqrt{p(n)})+K_2(M_np(n)^{s/2}+M_{n+m}(p(n)^{s/2}+p(m)))+K_3p(n)^s
	\end{equation*} for some positive constants $K_1,K_2$ and $K_3$. 
\end{lemma}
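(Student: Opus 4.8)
The plan is to prove the asserted quasi-independence by localising the moving target on a fine partition and then applying the decay of correlations twice: once across the gap $n$ to decouple the localisation in $x$ from the return at time $n$, and once across the gap $m$ to decouple the two returns. Throughout, let $L$ denote the Lipschitz constant of $f$, let $s,c$ be the exponent and constant from the upper Ahlfors regularity \eqref{uar}, and recall that by the defining property of $r_k$ one has $\mu(B(f(x),r_k(f(x))))=M_k$ for every $x$ and every $k$, so that the target balls have a known measure.

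First I would fix the discretisation scale $\eta:=\sqrt{p(n)}$ and take a partition $\{J_\ell\}$ of $[0,1]$ into $O(1/\eta)$ intervals of length at most $\eta$, with midpoints $x_\ell$ and centres $c_\ell:=f(x_\ell)$. On each $J_\ell$ both the centre $f(x)$ and the radius $r_k(f(x))$, $k\in\{n,n+m\}$, are nearly constant: since $f$ is $L$-Lipschitz and $r_k$ is $1$-Lipschitz (Lemma \ref{lemma3}), the target $B(f(x),r_k(f(x)))$ is contained in the fixed outer ball $B_k^\ell:=B(c_\ell,\,r_k(c_\ell)+L\eta)$ for all $x\in J_\ell$. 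The enlargement replaces $B(c_\ell,r_k(c_\ell))$ by a ball whose extra shell consists of two intervals of length $\le L\eta$, so by \eqref{uar} one gets $\mu(B_k^\ell)\le M_k+C_0\eta^s$ with $C_0=2cL^s$. Consequently $\chi_{R_n}(x)\le \chi_{B_n^\ell}(T^nx)$ and $\chi_{R_{n+m}}(x)\le \chi_{B_{n+m}^\ell}(T^{n+m}x)$ on $J_\ell$, and the problem reduces to bounding the fixed-target quantity $\mu(\{x\in J_\ell: T^nx\in B_n^\ell,\ T^{n+m}x\in B_{n+m}^\ell\})$.

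For fixed balls $B_1:=B_n^\ell$ and $B_2:=B_{n+m}^\ell$, I would write the joint event as $\int \chi_{J_\ell}\,(u\circ T^n)\,d\mu$ with $u:=\chi_{B_1}\cdot(\chi_{B_2}\circ T^m)$, exploiting $T^{n+m}=T^m\circ T^n$. Since $\chi_{J_\ell}$ is an interval indicator we have $\norm{\chi_{J_\ell}}_{\mathrm{BV}}\le 3$, and the decay of correlations across the gap $n$ gives $\int \chi_{J_\ell}(u\circ T^n)\,d\mu\le \norm{u}_1(\mu(J_\ell)+3p(n))$. A second application, now to $u$ itself across the gap $m$ with the interval indicator $\chi_{B_1}$ as the BV factor, yields $\norm{u}_1=\mu(B_1\cap T^{-m}B_2)\le \mu(B_1)\mu(B_2)+3\mu(B_2)p(m)$. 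Combining these and inserting $\mu(B_k^\ell)\le M_k+C_0\eta^s$ bounds $\mu(R_n\cap R_{n+m}\cap J_\ell)$ by $(M_n+C_0\eta^s)(M_{n+m}+C_0\eta^s)(\mu(J_\ell)+3p(n))$ together with the $p(m)$-contribution $3(M_{n+m}+C_0\eta^s)p(m)(\mu(J_\ell)+3p(n))$.

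Finally I would sum over the $O(1/\eta)$ intervals. The terms carrying $\mu(J_\ell)$ sum to at most $(M_n+C_0\eta^s)(M_{n+m}+C_0\eta^s)$, which expands into $M_nM_{n+m}$, the shell errors $C_0\eta^s(M_n+M_{n+m})$, and $C_0^2\eta^{2s}$; with $\eta=\sqrt{p(n)}$ these are exactly the $M_nM_{n+m}$, $p(n)^{s/2}(M_n+M_{n+m})$, and $p(n)^s$ contributions. The terms carrying $3p(n)$ are multiplied by the number $O(1/\eta)$ of intervals, producing $O(M_nM_{n+m}\,p(n)/\eta)=O(M_nM_{n+m}\sqrt{p(n)})$ (the $K_1$ term) together with strictly lower-order remainders, while the $p(m)$-contribution sums to $O(M_{n+m}p(m))$ plus lower-order terms. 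Collecting everything yields the claimed inequality with suitable $K_1,K_2,K_3$. The main obstacle is the simultaneous $x$-dependence of both the centre and the radius of the target, which forbids any direct use of correlations; the localisation above removes it at the cost of a length-scale-$\eta$ error in the target, and the choice $\eta=\sqrt{p(n)}$ is precisely what balances the $p(n)/\eta$ loss from summing the gap-$n$ correlation error against the $\eta^s$ loss from the Ahlfors shells, producing the stated exponents $\tfrac12$ and $\tfrac s2$.
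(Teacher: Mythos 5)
Your proposal is correct and takes essentially the same route as the paper's own proof: the paper likewise partitions $[0,1]$ uniformly at scale chosen so that the mesh is comparable to $\sqrt{p(n)}$, dominates the moving target $B(f(x),r_k(f(x)))$ on each cell by the fixed enlarged ball $B(f(x_h),r_k(f(x_h))+L/\ell)$ via the Lipschitz bounds on $f$ and $r_k$ (Lemma \ref{lemma3}), applies the $L^1$-against-BV decay twice (first across the gap $n$ with the cell indicator as the BV factor, then across the gap $m$ with a ball indicator as the BV factor), and controls the enlargement error by upper Ahlfors regularity. Your balancing of the $p(n)/\eta$ correlation loss against the $\eta^s$ shell loss at $\eta=\sqrt{p(n)}$ is exactly the paper's choice $\sqrt{p(n)}/2<L/\ell<\sqrt{p(n)}$, yielding the same exponents and constant structure.
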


\begin{proof}
	Let $n,m\in \mathbb{N}$. Define 
	\begin{equation*}
		E_{n,m}(x,y,z)=\begin{cases}
			1 & \text{if }y\in B(f(x),r_n(f(x))), z\in B(f(x),r_{n+m}(f(x)))\\
			0 & \text{otherwise}
		\end{cases}
	\end{equation*}
	Note that 
	\begin{equation*}
		\mu(R_n\cap R_{n+m})=\int F_{n,m}(x,T^nx,T^{n+m}x)\,d\mu(x)
	\end{equation*}
	We will approximate $E_{n,m}$ by a sum of products of characteristic functions of intervals. For each $\ell\in \mathbb{N}$, we first partition $X$ evenly into $\ell$ subintervals $\{I_h\}_{h=0}^{\ell-1}$ and denote the middle point of $I_h$ by $x_h$ for each $h=0,\ldots, \ell-1$. Define 
	\begin{equation*}
		F_{n,m,\ell}(x,y,z):=\sum_{h=0}^{\ell-1}\chi_{I_k}(x)\cdot \chi_{B(f(x_h),r_n(f(x_h))+\frac{L}{\ell})}(y)\cdot \chi_{B(f(x_h),r_{n+m}(f(x_h))+\frac{L}{\ell})}(z)
	\end{equation*}
	Note that for each $x\in I_h$ and $w\in B(f(x),r_n(f(x)))$, $|x-x_h|<\frac{1}{2\ell}$, $|f(x)-f(x_h)|<\frac{L}{2\ell}$ and $|r_n(f(x))-r_n(f(x_h))|<\frac{L}{2\ell}$, so $w\in B(f(x_h),r_n(f(x_h))+\frac{L}{\ell})$ and hence $F_{n,m,\ell}\geq F_{n,m}$. 
	Then 
	\begin{align*}
		&\int F_{n,m}(x,T^nx,T^{n+m}x)\,d\mu(x)\\
		\leq & \int F_{n,m,\ell}(x,T^nx,T^{n+m}x)\,d\mu(x)\\
		=& \sum_{h=0}^{\ell-1} \int \chi_{I_k}(x)\cdot \chi_{B(f(x_h),r_n(f(x_h))+\frac{L}{\ell})}(T^nx)\cdot \chi_{B(f(x_h),r_{n+m}(f(x_h))+\frac{L}{\ell})}(T^{n+m}x)\,d\mu(x)\\
		\underset{\text{BV}}\leq & \sum_{h=0}^{\ell-1} \int \chi_{B(f(x_h),r_n(f(x_h))+\frac{L}{\ell})}(x)\cdot \chi_{B(f(x_h),r_{n+m}(f(x_h))+\frac{L}{\ell})}(T^{m}x)\,d\mu(x)\cdot \left(\mu(I_h)+3p(n)\right)\\
		\underset{\text{BV}}\leq & \sum_{h=0}^{\ell-1} \left(\mu\left(B\left(f(x_h),r_n(f(x_h))+\frac{L}{\ell}\right)\right)+3p(m)\right)\left(\mu\left(B\left(f(x_h),r_{n+m}(f(x_h))+\frac{L}{\ell}\right)\right)\right)\\
		& \cdot (\mu(I_k)+3p(n))
	\end{align*}
	By upper Ahlfors regularity \eqref{ar}, 
	\begin{equation*}
		\mu\left(B\left(f(x_h),r_n(f(x_h))+\frac{L}{\ell}\right)\right)\leq M_n+2c\left(\frac{L}{\ell}\right)^s 
	\end{equation*} and 
	\begin{equation*}
		\mu\left(B\left(f(x_h),r_{n+m}(f(x_h))+\frac{L}{\ell}\right)\right)\leq M_{n+m}+2c\left(\frac{L}{\ell}\right)^s.
	\end{equation*}
	Now we pick $\ell$ so that $\sqrt{p(n)}/2<\frac{L}{\ell}<\sqrt{p(n)}$. Then 
	\begin{align*}
		&\int F_{n,m}(x,T^nx,T^{n+m}x)\,d\mu(x)\\
		\leq & (M_n+2cp(n)^{s/2}+3p(m))(M_{n+m}+2cp(n)^{s/2})\cdot (1+6L\sqrt{p(n)})\\
		\leq & M_nM_{n+m}(1+K_1\sqrt{p(n)})+K_2(M_np(n)^{s/2}+M_{n+m}(p(n)^{s/2}+p(m)))+K_3p(n)^s
	\end{align*}
	where $K_1=6L$, $K_2=(1+6L\sqrt{\sup_np(n)})(2c+3)$ and $K_3=4c^2(1+6L\sqrt{\sup_np(n)})$. 
\end{proof}\par 
\smallskip

To prove the divergence case, we will need the Chung-Erd\"{o}s inequality. 

\begin{lemma}
	In a probability space, for measurable sets $A_1,\ldots,A_n$, 
	\begin{equation*}
		\mu(A_1\cup \cdots\cup A_n)\geq \frac{\left(\sum_{j=1}^n \mu(A_j)\right)^2}{\sum_{j,k=1}^n \mu(A_j\cup A_k)}
	\end{equation*}
\end{lemma}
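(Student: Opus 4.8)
The plan is to prove this as a direct consequence of the Cauchy–Schwarz inequality applied to a suitable indicator sum, which is the standard route and avoids any combinatorial inclusion–exclusion. First I would set $S=\sum_{j=1}^n \chi_{A_j}$, a nonnegative integer-valued random variable on the probability space, and observe that the event $\{S\geq 1\}$ is precisely $A_1\cup\cdots\cup A_n$. The key quantities are then $\int S\,d\mu=\sum_{j=1}^n\mu(A_j)$ and $\int S^2\,d\mu=\sum_{j,k=1}^n\mu(A_j\cap A_k)$.

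The central step is to apply the Cauchy–Schwarz inequality to the product $S=S\cdot\chi_{\{S\geq 1\}}$, which is legitimate because $S$ vanishes off the set $\{S\geq 1\}$. This gives
\begin{equation*}
\left(\int S\,d\mu\right)^2=\left(\int S\,\chi_{\{S\geq 1\}}\,d\mu\right)^2\leq \int S^2\,d\mu\cdot \int \chi_{\{S\geq 1\}}\,d\mu=\int S^2\,d\mu\cdot \mu\!\left(\bigcup_{j=1}^n A_j\right).
\end{equation*}
Rearranging yields
\begin{equation*}
\mu\!\left(\bigcup_{j=1}^n A_j\right)\geq \frac{\left(\int S\,d\mu\right)^2}{\int S^2\,d\mu}=\frac{\left(\sum_{j=1}^n\mu(A_j)\right)^2}{\sum_{j,k=1}^n\mu(A_j\cap A_k)},
\end{equation*}
assuming the denominator is positive (if it vanishes, all $A_j$ are null and both sides are zero).

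The final step is to reconcile the $\cap$ appearing naturally in the Cauchy–Schwarz computation with the $\cup$ written in the statement. Since $\mu(A_j\cap A_k)\leq \mu(A_j\cup A_k)$ for every pair, we have $\sum_{j,k}\mu(A_j\cap A_k)\leq \sum_{j,k}\mu(A_j\cup A_k)$, so enlarging the denominator only weakens the bound, and the stated inequality with $\cup$ follows from the sharper one with $\cap$. I do not expect any genuine obstacle here; the only point requiring a little care is the justification that $S^2$ is integrable (immediate, since $0\leq S\leq n$ is bounded) and the degenerate case where the denominator is zero, both of which are routine to dispatch. The crux of the argument is simply recognizing that Cauchy–Schwarz against the indicator of the support converts the first and second moments of $S$ into exactly the three sums in the statement.
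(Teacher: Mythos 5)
Your proof is correct, and in fact the paper offers no proof of this lemma at all: it is stated as a known result (the Chung--Erd\H{o}s inequality) and used directly, so there is nothing in the paper to deviate from. Your Cauchy--Schwarz argument --- writing $S=\sum_j \chi_{A_j}$, applying Cauchy--Schwarz to $S=S\cdot\chi_{\{S\geq 1\}}$, and identifying $\int S\,d\mu=\sum_j\mu(A_j)$ and $\int S^2\,d\mu=\sum_{j,k}\mu(A_j\cap A_k)$ --- is the standard proof, and your handling of the degenerate case and of integrability is fine. One point you caught that deserves emphasis: the paper's statement has $\mu(A_j\cup A_k)$ in the denominator, which is almost certainly a typo for $\mu(A_j\cap A_k)$ --- the classical inequality has the intersection, and indeed the paper's own application in the proof of Theorem 2 defines $C_N=\sum_{j,k\in I_N}\mu(R_j\cap R_k)$ and invokes $\mu(U_N)\geq S_N^2/C_N$, i.e.\ the intersection version. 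Your proof establishes the sharper $\cap$ form and then correctly observes that the $\cup$ form as literally stated follows a fortiori since enlarging the denominator weakens the bound; so you have proved both the statement as written and the version the paper actually uses.
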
\par 
\smallskip
Now we finish the proof of Theorem \ref{thm2}. 

\begin{proof}[Proof of Theorem \ref{thm2}]
	In addition to the assumptions at the beginning of this section, we assume that $p(n)=C\gamma^{n}$ for some $C>0$ and $0<\gamma<1$. Let 
	\begin{equation*}
		I_N=\left\{j:-\frac{2}{s}\log_\gamma N\leq j\leq N\right\}
	\end{equation*} and 
	\begin{equation*}
		U_N=\bigcup_{j\in I_N}R_j.
	\end{equation*}
	Note that $\limsup_n R_n=\limsup_N U_N$.   \par 
	Let 
	\begin{equation*}
		S_N=\sum_{j\in I_N}\mu(R_j)
	\end{equation*} and 
	\begin{equation*}
		\sigma_N=\sum_{j\in I_N}M_j
	\end{equation*}
	By Lemma \ref{lemma5}, we have 
	\begin{equation*}
		\sigma_N-c_1\leq S_N\leq \sigma_N+c_1
	\end{equation*}
	where the constant $c_1=C\gamma^{-1}$. On the other hand, let 
	\begin{equation*}
		C_N=\sum_{j,k\in I_N}\mu(R_j\cap R_k)
	\end{equation*}
	and by Lemma \ref{lemma6}, we have 
	\begin{align*}
		C_N=&S_N+2\sum_{j,k\in I_N, j>k}\mu(E_j\cap E_k)\\
		\leq & S_N+(1+K_1CN^{-1/s})\sigma_N^2+2K\sum_{j,k\in I_N,j>k}(M_k\gamma^{ks/2}+M_j(\gamma^{ks/2}+\gamma^{j-k})+\gamma^{ks/2})
	\end{align*} where $K=K_2+K_3$. 
	Denote  
	\begin{equation*}
		D_N:=\sum_{j,k\in I_N,j>k}(M_k\gamma^{ks/2}+M_j(\gamma^{ks/2}+\gamma^{j-k})+\gamma^{ks/2})
	\end{equation*}
	We show that $R_N$ is bounded, proceeding term by term. 
	Each of the first two and last sums is less than or equal to 
	\begin{align*}
		\sum_{j=-\frac{2}{s}\log_\gamma N}^N \sum_{k=-\frac{2}{s}\log_\gamma N}^{j-1}\gamma^{ks/2}\leq \sum_{j=-\frac{2}{s}\log_\gamma N}^N c_2 e^{-\log N}\leq c_2
	\end{align*} for some constant $c_2$ dependent on the bound in  \eqref{sum}. 
	For the third sum, 
	\begin{equation*}
		\sum_{j,k\in I_N, j>k}M_j \gamma^{j-k}=\sum_{j=-\frac{2}{s}\log_\gamma N} M_j \sum_{k=-\frac{2}{s}\log_\gamma N}^{j-1} \gamma^{j-k}\leq c_3 \sigma_N
	\end{equation*}
	Hence
	\begin{align*}
		C_N\leq& S_N+(1+K_1CN^{-1/2})\sigma_N^2+2K(3c_2+c_3\sigma_N)\\
		\leq & \sigma_N+c_1+(1+K_1CN^{-1/2})\sigma_N^2+2K(3c_2+c_3\sigma_N)
	\end{align*} 
	Now we can use the Chung-Erd\"{o}s inequality to conclude that 
	\begin{equation}\label{cei}
		\mu(U_N)\geq \frac{S_N^2}{C_N}\geq \frac{(\sigma_N-c_1)^2}{(1+K_1CN^{-1/2})\sigma_N^2+c_4\sigma_N+c_5}
	\end{equation}
	for some constants $c_4,c_5$; as we take $\limsup_{N\to \infty}$ in  \eqref{cei}, we have that 
	\begin{equation*}
		\limsup_N \mu(U_N)\geq 1
	\end{equation*}
	Hence we proved that $\limsup_n R_n=\limsup_N U_N=1$. 
\end{proof}

\end{document}